\newtheorem{mthm}{Theorem}
\newtheorem{thm}{Theorem}
\newtheorem{prop}[thm]{Proposition}
\newtheorem{ques}[thm]{Question}
\theoremstyle{definition}
\newtheorem{defn}[thm]{Definition}
\theoremstyle{remark}
\newtheorem{rem}[thm]{Remark}
\newtheorem{exa}[thm]{Example}
\DeclareMathOperator{\im}{im}
\DeclareMathOperator{\Ric}{Ric}
\DeclareMathOperator{\Scal}{Scal}
\DeclareMathOperator{\Sym}{Sym}
\DeclareMathOperator{\tr}{tr}
\begin{document}

\title{Instability of conformally Kähler, Einstein metrics}

\author{Olivier Biquard \and Tristan Ozuch}
\address{Sorbonne Université and Université Paris Cité, CNRS, IMJ-PRG, F-75005 Paris, France}
\email{olivier.biquard@sorbonne-universite.fr}
\address{MIT Department of Mathematics, 182 Memorial Dr, Cambridge, MA 02139, USA}
\email{ozuch@mit.edu}

\begin{abstract}
    We prove the instability of conformally Kähler, compact or ALF Einstein $4$-manifolds with nonnegative scalar curvature which are not half conformally flat. This applies to all the known examples of gravitational instantons which are not hyperKähler and to the Chen-Lebrun-Weber metric in particular. 
\end{abstract}

\maketitle

\section*{Introduction}

Einstein metrics are considered \textit{optimal} in Riemannian geometry, especially in dimension $4$. Their \textit{stability} in physics or as fixed points of Ricci flow is a crucial question: one should be able to go away from unstable metrics by applying Ricci flow. In both contexts, the \textit{stability} is defined as being a local maximum of the Hilbert-Einstein functional $\mathcal{S}:g\mapsto \int_M \Scal^gdv_g$ among Yamabe metrics, or of Perelman's functionals among general metrics--these conditions are equivalent to second perturbative order at Einstein metrics.

For Ricci-flat metrics, special holonomy implies stability, see for instance \cite{DaiWanWei05}. An open question is that of the converse, which is sometimes formulated as a conjecture, see \cite{Ach20} under more assumptions.
\begin{ques}[Folklore]
    Are Ricci-flat metrics with \emph{generic} holonomy \emph{unstable}?
\end{ques}
Stability is a subtle question which has mostly been studied on the most symmetric Einstein metrics, and instabilities have been detected numerically on some of the simplest Einstein metrics, \cite{You83a,You83b}. 

In this article, we tackle the question for essentially all known examples of compact or complete Einstein $4$-manifolds with nonnegative scalar curvature at once. This is because, all known examples of Einstein metrics with nonnegative scalar curvature and curvature decay at infinity have a specific structure: they are \textit{conformally Kähler}, or \textit{Hermitian}. This condition is also called \textit{type $D^+$} in the physics literature by analogy with the Petrov classification in Lorentzian geometry.

\begin{mthm}
  Let $(M^4,g)$ be an Einstein manifold which is conformal to a Kähler metric. Suppose
  \begin{itemize}
  \item $(M,g)$ is compact with positive scalar curvature, or
  \item $(M,g)$ is one of the known examples of ALF gravitational instantons (see Section \ref{sec:instability-alf}).
  \end{itemize}
  Then if $(M^4,g)$ is not half-conformally flat, then it is \emph{unstable}.
\end{mthm}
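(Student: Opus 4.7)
The plan is to exhibit an explicit transverse-traceless (TT) symmetric $2$-tensor $h$ on $(M,g)$ along which the stability operator $\Delta_L - 2E$ (with $E$ the Einstein constant, possibly zero in the ALF case) has strictly negative expectation, so that the second variation of the Einstein-Hilbert functional (equivalently Perelman's $\nu$-functional) is strictly positive and $g$ fails to be a local maximum.

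The main tool I would use is the four-dimensional Singer-Thorpe isomorphism $\Sym^2_0(T^*M) \cong \Lambda^+ M \otimes \Lambda^- M$, which identifies trace-free symmetric $2$-tensors with symmetric products $\alpha \odot \beta$ of a self-dual and an anti-self-dual $2$-form viewed as commuting skew endomorphisms. The conformally Kähler hypothesis supplies a canonical self-dual form, namely the fundamental form $\omega$ of the Hermitian structure $(g,J)$, which by Derdzinski's structure theorem is an eigenform of $W^+$ for the simple eigenvalue, with the remaining pair of $W^+$-eigenvalues equal and nonzero precisely when $g$ is not half conformally flat. Writing $\tilde g = f^{-2}g$ for the Kähler representative in the conformal class, one has $\nabla^{\tilde g}\tilde\omega = 0$, so $\nabla^g \omega$ is explicit in terms of the Lee form $\theta$ of $(g,J)$. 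I would then take the test tensor to be $h = \omega \odot \beta$ for an anti-self-dual $2$-form $\beta$ to be chosen, and reduce the TT constraint $\delta h = 0$ to a first-order equation on $\beta$ coupled to $\theta$.

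The next step is a Weitzenböck-type computation of $\langle (\Delta_L - 2E)h, h\rangle$. The rough Laplacian of $h$ reduces to that of $\beta$ plus lower-order terms in $\theta$, and the curvature operator, via Singer-Thorpe, acts as $W^+ \otimes \mathrm{id} + \mathrm{id}\otimes W^-$ plus a scalar block; along the $\omega$-slice the $W^+$-contribution is exactly Derdzinski's simple eigenvalue, which carries a definite sign. For the compact positive-scalar case, $\beta$ can be taken harmonic when $b^-(M)>0$ (as holds for Page, Chen-LeBrun-Weber, and similar blowups), and more generally built from the Hamiltonian Killing field $\xi = J\nabla^{\tilde g} s_{\tilde g}$ furnished by Derdzinski; in the ALF case one uses the $L^2$ anti-self-dual cohomology of the known gravitational instantons from Section \ref{sec:instability-alf}, together with a cutoff argument to produce a compactly supported test tensor.

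The main obstacle is the Weitzenböck bookkeeping: the TT constraint, the Lee form $\theta$, and the splitting of $\omega$ across $\Lambda^+$ entangle in a delicate way, and closing the estimate requires a precise balance between the positive $|\nabla \beta|^2$ contribution and the negative $|W^+|^2$-type term. The not-half-conformally-flat hypothesis enters exactly at this point, guaranteeing that the negative Weyl contribution does not vanish identically and, for the right choice of $\beta$, dominates the gradient term, producing the instability.
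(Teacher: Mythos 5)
Your setup points in the right direction---the identification $\Sym^2_0T^*M\cong\Omega_+\otimes\Omega_-$, a test tensor of the form (distinguished selfdual form)$\,\circ\,$(anti-selfdual form), and Derdzinski's eigenvalue of $W_+$ as the source of negativity---but there is a genuine gap at exactly the point you flag as ``the main obstacle.'' You propose to carry out the Weitzenb\"ock computation directly in the Einstein metric $g$, where $\omega$ is not parallel, and to hope that the negative Weyl term dominates the positive gradient term ``for the right choice of $\beta$.'' The paper's proof works because there is no such balance to strike: the operator $P=T^*T-W_+$ (where $T$ is the component of $d_-^*$ landing in $S_+^3S_-$) is \emph{conformally invariant}, $P^{f^2g}h=f^{-1}P^g(f^{-1}h)$, so the whole computation is transported to the K\"ahler representative $\tilde g$, where $\tilde\omega^+$ is parallel and $\omega^-$ is closed, hence $d_-^*(\omega^-\,\tilde\circ\,\tilde\omega^+)=0$ and the positive term $T^*T$ vanishes \emph{identically} on the test tensor. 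One then gets the exact eigenvalue identity $P^gh=-f^{-3}h$ for $h=\omega^-\circ f^3\tilde\omega^+$. Note also that the correct selfdual factor is the Killing $2$-form $\tau=f^3\tilde\omega^+$, not the fundamental form $f^2\tilde\omega^+$ of $(g,J)$: the power of $f$ is dictated by the conformal weight of $P$, and your normalization would spoil the eigenvalue equation. Without the conformal invariance, the direct computation in $g$ with the Lee form is precisely the ``delicate bookkeeping'' you describe, and you give no mechanism for closing the estimate; the non-half-conformally-flat hypothesis does not by itself make the Weyl term dominate an uncontrolled gradient term.

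The second problem is the TT constraint. Imposing $\delta h=0$ is both unnecessary and an obstruction to your construction. The second variation formula \eqref{eq:19} holds for all compactly supported \emph{traceless} deformations, and after rewriting $L=P+\frac23\delta_0^*\delta-\frac{\Scal}{12}$ the divergence term enters $\mathcal S^{(2)}$ with a favorable sign (it contributes $+\frac13\int_M|\delta h|^2$), so no transversality is needed. Conversely, the natural tensor $\omega^-\circ\tau$ is divergence-free for $\tilde g$ but not for $g$, and solving your proposed first-order equation coupling $\beta$ to the Lee form would force $\beta$ away from being closed and anti-selfdual, destroying the eigenvalue computation you need. Dropping the TT requirement and invoking the conformal invariance of $T^*T-W_+$ are the two missing ingredients; the choice of $\beta$ (harmonic anti-selfdual forms in the compact case, $d_-$ of the dual of $J\nabla f$ with a cutoff in the ALF case) then matches what the paper does.
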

Note that the method of proof is abstract and uniform: destabilizing symmetric 2-tensors are produced as compositions $\omega_- \, \circ \, \tau$, where $\omega_-$ is a harmonic anti-selfdual 2-form and $\tau$ is a Killing selfdual 2-form, see Section \ref{sec:instability-compact}.

Except for $\mathbb{S}^4$ and $\mathbb{CP}^2$, the theorem applies to all the known Einstein metrics with positive scalar curvature, i.e. the metrics of Page, Chen-LeBrun-Weber and the Kähler-Einstein Fano manifolds; the new case here is the Chen-LeBrun-Weber metric. It also applies to all of the known so-called gravitational instantons which are not hyperKähler: the Riemannian Kerr (including Schwarzschild), Taub-Bolt and Chen-Teo metrics. This is likely to be the full list, see \cite{AADNS,LiM23b} for the ALF case and \cite{LiM23a} for the ALE case.

Delay \cite{Del24} was the first to realize that Einstein stability can be checked in the conformal class. Elaborating on this idea, our proof relies on three main ingredients: a Weitzenböck formula from \cite{BiqRol09}, the conformal invariance of an operator closely related to the second variation of $\mathcal{S}$, and the construction of specific harmonic anti-selfdual $2$-forms on conformally Kähler Ricci-flat manifolds.

We treat the conformal invariance in Section \ref{sec:conformal-invariance}, with an approach which is different from the one by Delay. We then prove the instability result in the compact case in Section \ref{sec:instability-compact}. We finally deal with the ALF setting in Section \ref{sec:instability-alf}.

\section{Conformal invariance}
\label{sec:conformal-invariance}

Let $(M^4,g)$ be an oriented Riemannian manifold. Suppose $g$ is Einstein: $\Ric_g=\Lambda g$. From \cite[(1.180a)]{Bes87}, the linearization of the Einstein operator $\Ric-\Lambda$, acting on symmetric 2-tensors, is given by the formula
\begin{equation}
  \label{eq:1}
  d(\Ric-\Lambda) = \frac12 \nabla^*\nabla - \operatorname{Rm} - \delta^* B,
\end{equation}
where $B=\delta+\frac12 d \tr$ is the Bianchi operator, $\delta^*$ is the adjoint of the divergence, and $\operatorname{Rm}$ is the natural action of the Riemannian curvature on symmetric 2-tensors. The last term is a gauge term, so one considers often the linearization with gauge fixing
\begin{equation}
  \label{eq:2}
  L := \frac12 \nabla^*\nabla - \operatorname{Rm}.
\end{equation}
{The diffeomorphism invariance and the Bianchi identity $B(\Ric-\Lambda)=0$ imply the identities, for sections of $\Sym^2_0 T^*M $:
  \begin{equation}
    \label{eq:29}
    L\delta_0^* = \delta_0^*(B\delta^*), \quad \delta L = (B\delta^*)\delta,
  \end{equation}
where $\delta_0^*$ is the trace-free part of $\delta^*$.}
In dimension four we have the bundles of selfdual or anti-selfdual 2-forms $\Lambda_\pm$ (we will denote by $\Omega_\pm$ the corresponding space of sections), and a special isomorphism $\Lambda_- \otimes \Lambda_+=\Sym^2_0 T^*M $, which is given by
\begin{equation}
  \label{eq:3}
  \alpha_-\otimes \alpha_+ \longmapsto \alpha_-\circ\alpha_+,
\end{equation}
identified with the composition of the anti-symmetric endomorphisms associated to the $2$-forms $\alpha_\pm$. We will also denote by $\Gamma(E)$ the space of sections of the bundle $E$. Observe that by \cite[(4.6)]{BiqRol09}, for an Einstein metric we have on trace-free symmetric 2-tensors:
\begin{equation}
  \label{eq:4}
  L = d_-d_-^* - W_+ -\frac{\Scal}{12},
\end{equation}
where $d_-:\Gamma(\Lambda^1\otimes\Lambda_+)\rightarrow \Gamma(\Lambda_-\otimes \Lambda_+)$ is the composition of the covariant exterior derivative on 1-forms with values in $\Lambda_+$ and the projection  $\Lambda^2\otimes\Lambda_+\to\Lambda_-\otimes\Lambda_+$ on the anti-selfdual part. The action of $W_+$ is given by $\alpha_-\circ\alpha_+\mapsto \alpha_-\circ W_+(\alpha_+)$.

Let us use the spinor notation: the basic spinor bundles are $S_\pm$ (they exist at least locally), and their symmetric powers are $S_\pm^k$. Then $TM=S_+\otimes S_-$ and $\Lambda_\pm=S_\pm^2$.

For simplicity of notation we will now drop the tensor product signs. We have the decomposition
\begin{equation}
  \label{eq:5}
  \Lambda^1 \Lambda_+=(S_+ S_-)S_+^2=S_+^3S_- \oplus S_+S_-.
\end{equation}
We can identify the second summand with $\Lambda^3$ and we have the projection
\begin{equation}
  \label{eq:6}
  \pi:\Lambda^1 \Lambda_+ \rightarrow \Lambda^3, \qquad \pi(\alpha\otimes \omega)=\alpha\wedge \omega.
\end{equation}
Conversely, we have the embedding
\begin{equation}
  \label{eq:7}
  \sigma:\Lambda^3 \hookrightarrow \Lambda^1\Lambda_+, \qquad \sigma(\varphi)=\sum_1^4 e^i\otimes(e_i \lrcorner \varphi)_+,
\end{equation}
where $(e_i)$ is an orthonormal basis of $TM$. One calculates
\begin{equation}
  \label{eq:8}
  \sigma^*=\pi, \qquad \pi\circ\sigma = \frac32.
\end{equation}
We justify the second equality: the operator $\pi\circ\sigma$ is a $SO(4)$-invariant operator on the irreducible representation $\Lambda^3$, and therefore is a constant. We have the easy equality $\sum_1^4 e^i\wedge(e_i\lrcorner \varphi)=3\varphi$ for any 3-form $\varphi$. On the other hand,
\[ \sum_1^4 e^i\wedge(e_i\lrcorner \varphi)_+ = \sum_1^4 e^i\wedge(e_i\lrcorner \varphi)_- \]
because the two sides are exchanged by a change of orientation. Therefore
\[ \pi\circ\sigma(\varphi) = \sum_1^4 e^i\wedge(e_i\lrcorner \varphi)_+ = \frac12 \sum_1^4 e^i\wedge(e_i\lrcorner \varphi)=\frac32 \varphi. \]

We now decompose $d_-^*$ on the sum (\ref{eq:5}): we call $T$ the projection on the first summand, and thanks to the identification (\ref{eq:3}) we can consider the divergence on traceless 2-tensors as an operator $\delta:\Gamma(\Lambda_-\Lambda_+)\rightarrow \Omega^1$. 
\begin{prop}
  We have the formulas:
  \begin{align}
    \label{eq:9}
    d_-^* &= T - \frac23 \sigma * \delta , \\ \intertext{ and}
    d_-d_-^* &= T^*T + \frac23 \delta_0^*\delta.\label{eq:10}
  \end{align}
\end{prop}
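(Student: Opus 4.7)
The plan is to identify the orthogonal projections in (\ref{eq:5}) and describe $d_-^*$ componentwise. Writing $P_1, P_2$ for the orthogonal projections of $\Omega^1\otimes\Omega_+$ onto $S_+^3 S_-$ and $S_+S_-\cong\Omega^3$ respectively, the relations $\pi=\sigma^*$ and $\pi\sigma=\tfrac{3}{2}$ from (\ref{eq:8}) give $P_2=\tfrac{2}{3}\sigma\pi$, whence
\[
d_-^* \;=\; P_1 d_-^* + P_2 d_-^* \;=\; T + \tfrac{2}{3}\sigma\circ(\pi d_-^*).
\]
Thus (\ref{eq:9}) is equivalent to the identity $\pi d_-^* = *\delta$ as operators $\Omega_-\otimes\Omega_+ = \Sym^2_0 T^*M\to\Omega^3$; taking adjoints and using that $*\colon\Omega^1\to\Omega^3$ is an isometry (so $*^*=*^{-1}$) and that $\delta^*$ restricted to produce trace-free tensors is $\delta_0^*$, this is in turn equivalent to
\[
d_-\circ\sigma \;=\; \delta_0^*\circ *^{-1}\quad\text{as operators}\quad \Omega^3\longrightarrow\Sym^2_0 T^*M.
\]

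To verify this remaining identity, I would write $\varphi = *\beta$ for $\beta\in\Omega^1$ and compute directly. Using $\iota_{e_i}*\beta = -*(e^i\wedge\beta)$ and the fact that $(*\omega)_+=\omega_+$ on any 2-form, the definition (\ref{eq:7}) becomes $\sigma(*\beta) = -\sum_i e^i\otimes(e^i\wedge\beta)_+$. Viewing this as an $\Omega_+$-valued $1$-form and applying $d^\nabla$ at a point in normal coordinates (where $\nabla e^i=0$ and the selfdual projection commutes with $\nabla$), the outer exterior derivative contributes nothing and one gets
\[
d_-\sigma(*\beta) \;=\; -\sum_i\bigl(e^i\wedge (e^i\wedge\nabla\beta)_+\bigr)_-.
\]
Translating the right-hand side back from $\Omega_-\otimes\Omega_+$ to $\Sym^2_0 T^*M$ via the isomorphism (\ref{eq:3}) yields exactly the trace-free symmetric part of $\nabla\beta$, that is $\delta_0^*\beta$. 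The main obstacle will be the index bookkeeping in this last step: one must simultaneously handle the anti-selfdual projection on the outer $2$-form, the selfdual projection on the inner $2$-form, and the identification $\alpha^-\otimes\alpha^+\mapsto\alpha^-\circ\alpha^+$. A shortcut, if one prefers, is to note that natural first-order $SO(4)$-equivariant operators $\Omega^3\to\Sym^2_0 T^*M$ form a one-dimensional space, so the two sides must agree up to a constant which can be pinned down by testing on a single elementary example such as $\beta=x^1 dx^2$ at the origin of $\mathbb{R}^4$.

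Once (\ref{eq:9}) is established, (\ref{eq:10}) is a formal consequence. From (\ref{eq:9}) we get $d_-d_-^* = d_-T + \tfrac{2}{3}d_-\sigma\circ *\delta$. For the first term, since $T^* = (P_1d_-^*)^* = d_-P_1$ (as $P_1$ is self-adjoint) and $P_1^2=P_1$, one has
\[
T^*T \;=\; d_-P_1\cdot P_1 d_-^* \;=\; d_-P_1 d_-^* \;=\; d_-T.
\]
For the second, by (\ref{eq:9}) again, $d_-\sigma\circ *\delta = \delta_0^*\circ *^{-1}\circ *\circ\delta = \delta_0^*\delta$. Combining, $d_-d_-^* = T^*T + \tfrac{2}{3}\delta_0^*\delta$, which is (\ref{eq:10}).
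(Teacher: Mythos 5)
Your proposal is correct and follows essentially the same route as the paper: the paper derives \eqref{eq:9} from \eqref{eq:8} together with the identity $\delta=-*\pi\circ d_-^*$ (equivalent, since $**=-1$ on $\Omega^1$ and $\Omega^3$, to your $\pi d_-^*=*\delta$), and then obtains \eqref{eq:10} formally, exactly as you do via $P_2=\tfrac23\sigma\pi$ and $T^*T=d_-P_1d_-^*$. The only addition is that you sketch the verification of the key identity, which the paper leaves to the reader; both your direct computation and the uniqueness-of-the-natural-operator shortcut (with the constant fixed on one example) are sound ways to complete that step.
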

\begin{proof}
  For the first formula, by definition of $T$ we are left with proving $\pi \circ d_-^*=-\frac23 \pi \sigma * \delta$, that is $\pi \circ d_-^*=- * \delta$ by (\ref{eq:8}), or $\delta=* \pi \circ d_-^*$. Any section of $\Lambda_-\Lambda_+$ decomposes as a sum of decomposed sections $\alpha_-\alpha_+$, and to calculate at a point $p$ we can suppose that $(\nabla\alpha_+)(p)=0$. Then at the point $p$ we have
\begin{align*}
    \delta(\alpha_-\alpha_+) &= -\sum_1^4 e_i\lrcorner (\nabla_{e_i}\alpha_- \alpha_+)\\
              &=-\sum_1^4 \alpha_+(\nabla_{e_i}\alpha_-(e_i)) \\
              &=-\sum_1^4 \alpha_+(e_i\lrcorner \nabla_{e_i}\alpha_-) \\
              &=-\sum_1^4 * \big( (e_i\lrcorner \nabla_{e_i}\alpha_-) \wedge \alpha_+ \big) \\
    &= * \pi d_-^*(\alpha_-\alpha_+).
  \end{align*}
  Here we used the identities $\alpha_\pm(X)=X\lrcorner \alpha=\pm*(X\wedge \alpha_\pm)$ for the $2$-form $\alpha_\pm$ also seen as an antisymmetric endomorphism of $TM$ and a tangent vector $X$ (also identified to a 1-form via the metric).

  The second formula is a consequence of the first formula and of (\ref{eq:8}). 
\end{proof}

It follows that the operator $L$ from (\ref{eq:4}) can be rewritten as
\begin{equation}
  \label{eq:11}
  L = T^*T - W_+ + \frac23 \delta_0^*\delta - \frac{\Scal}{12} = P + \frac23 \delta_0^*\delta - \frac{\Scal}{12},
\end{equation}
where we define the operator $P$ on $\Sym_0^2T^*M$ by
\begin{equation}
  \label{eq:12}
  P = T^*T - W_+.
\end{equation}

The interest of the operator $P$ comes from:
\begin{prop}\label{prop: conformal covariance}
  The operator $P$ acting on $\Sym^2_0T^*M$ has the conformal invariance
  \begin{equation}
    \label{eq:13}
    P^{f^2g}h = f^{-1} P^g(f^{-1}h).
  \end{equation}
\end{prop}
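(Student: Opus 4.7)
The plan is to deduce the conformal covariance of $P = T^*T - W_+$ from the conformal covariance of the first-order operator $T$ itself, combined with the standard conformal behavior of the self-dual Weyl tensor. The key structural observation is that $T$ is a natural first-order operator between irreducible $SO(4)$-bundles $\Sym^2_0 T^*M = S_+^2 S_-^2$ and $S_+^3 S_-$, constructed by projecting $\nabla h \in T^*M \otimes \Sym^2_0 T^*M$ onto a specific $SO(4)$-summand, so it fits into the general framework of first-order conformally covariant operators between irreducibles.

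To execute the first step, I would note that the decomposition of $T^*M \otimes \Sym^2_0 T^*M$ into $SO(4)$-summands is conformally invariant as a decomposition of vector bundles (only the induced inner products depend on the chosen representative). Under $g \mapsto \hat g = f^2 g$, the Levi-Civita connection changes by an explicit, $df$-dependent, zeroth-order correction; after projection onto $S_+^3 S_-$, this collapses into a scalar multiple of the $df$-action on $h$, producing an identity of the form $T^{\hat g}(h) = f^\alpha T^g(f^\beta h)$ with specific weights $\alpha, \beta$ that can be pinned down by a short local computation in a $g$-orthonormal conformal frame.

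Taking the formal adjoint—accounting for $dv_{\hat g} = f^4 dv_g$ and the rescaling of inner products on both source and target—yields the conformal transformation law for $T^*T$. The endomorphism $W_+$, acting on $\alpha^- \circ \alpha^+$ through the self-dual Weyl tensor on the $\Omega_+$-slot, has explicit scaling (the Weyl tensor being a conformally invariant $(1,3)$-tensor). The desired covariance $P^{f^2 g}(h) = f^{-1} P^g(f^{-1} h)$ then reduces to matching the conformal anomalies of $T^*T$ and $W_+$. By representation theory, the only zeroth-order curvature correction of the correct bi-degree that can restore the covariance of $T^*T$ is a multiple of $W_+$, so the substance of the proof is fixing the coefficient to be $1$; this is the main obstacle, and I would resolve it by testing both sides on a well-chosen element (for example $h = \alpha^- \circ \alpha^+$ at a point). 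An alternative, more computational route uses formula (\ref{eq:9}), writing $T = d_-^* - \tfrac{2}{3}\sigma * \delta$ and invoking the conformal behavior of $d_-^*$ (through the conformally invariant projection onto $\Omega_-$) and of $\delta$ on trace-free symmetric tensors directly, bypassing the general Fegan-type argument in favor of manipulating already-available pieces.
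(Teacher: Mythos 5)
Your overall route is the paper's: identify $T$ as a first-order gradient (Stein--Weiss) operator between the irreducible bundles $S_+^2S_-^2$ and $S_+^3S_-$, invoke Fegan-type conformal covariance to get $T^{f^2g}=\sigma\circ f^{-2}\circ T^g\circ f\circ\sigma^{-1}$ (the paper pins the weights down by citing Pilca's formula (3.22) rather than by a frame computation), take adjoints using $dv_{f^2g}=f^4dv_g$, and record the scaling of $W_+$. The one step you underplay, and which the paper makes explicit, is translating the abstract isometric bundle identification $\sigma$ into a statement about a fixed section of $\Sym^2_0T^*M$: on symmetric 2-tensors $\sigma(h)=f^2h$, and it is this bookkeeping that converts (\ref{eq:16}) into the concrete formula (\ref{eq:17}).

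However, your description of where the substance lies is mistaken. You present $-W_+$ as a zeroth-order curvature correction whose coefficient must be tuned to cancel a conformal anomaly of $T^*T$, in the spirit of the scalar curvature term in the Yamabe operator. There is no such anomaly here: with the correct weights, $T^*T$ is conformally covariant on its own, satisfying exactly $(T^*T)^{f^2g}h=f^{-1}(T^*T)^g(f^{-1}h)$, and $W_+$, being a pointwise endomorphism scaling as $W_+^{f^2g}(h)=f^{-2}W_+^g(h)$, satisfies the \emph{same} covariance for trivial reasons since $f^{-2}W_+^g(h)=f^{-1}W_+^g(f^{-1}h)$. Hence any multiple of $W_+$ could be added to $T^*T$ without affecting (\ref{eq:13}); the coefficient $1$ is dictated not by conformal covariance but by the Weitzenböck identity (\ref{eq:4})--(\ref{eq:11}) relating $P$ to $L$. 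Your proposed pointwise test on $h=\alpha^-\circ\alpha^+$ would reveal that both "anomalies" vanish, so the argument still closes, but as written you locate the difficulty in the wrong place. Your alternative route via $T=d_-^*-\frac23\sigma*\delta$ is also more delicate than you suggest: neither $d_-^*$ nor $\delta$ alone has a clean transformation law (each picks up first-order $df$-terms valued in the $\Omega^3$-summand), and the point of isolating the component $T$ is precisely to discard that non-covariant piece.
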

\begin{proof}
  The operator $T:S_+^2S_-^2 \rightarrow S_+^3S_-$ is a gradient operator, and it is known that such operators are always conformally invariant \cite{Feg76}. In our case we can apply the formula \cite[(3.22)]{Pil11b} to obtain
  \begin{equation}
    \label{eq:14}
    T^{f^2g} = \phi \circ f^{-2} \circ T^g \circ f \circ \phi^{-1},
  \end{equation}
  where the identification $\phi$ between bundles for $g$ and $f^2g$ is chosen to be isometric, and we identify the function $f$ with the multiplication by $f$. It follows that:
  \begin{align}
    \label{eq:15}
    (T^*)^{f^2g} &= \phi \circ f^{-3} \circ (T^*)^g \circ f \circ \phi^{-1}, \\
    (T^*T)^{f^2g} &= \phi \circ f^{-3} \circ (T^*T)^g \circ f \circ \phi^{-1}. \label{eq:16}
  \end{align}

  For symmetric 2-tensors, the abstract identification $\phi$ between bundles for $g$ and $f^2g$ is the isometric map $\phi(h)= f^2h$, so in terms of a fixed section of $h$ of $\Sym^2_0 T^*M$ (independent of the metric), the formula (\ref{eq:16}) reads
  \begin{equation}
    \label{eq:17}
    (T^*T)^{f^2g} h = f^{-1} (T^*T)^g (f^{-1}h).
  \end{equation}

  On the other hand, for the Weyl tensor seen as an endomorphism of $\Sym^2_0 T^*M$ we have
  \begin{equation}
    \label{eq:18}
    W_+^{f^2g}(h) = f^{-2} W_+^g(h).
  \end{equation}
The proposition follows.
\end{proof}

\begin{rem}
  Delay \cite{Del24} proved the conformal invariance of $L-\frac23 \delta_0^*\delta+\frac{\Scal}{12}$. Here we proved the conformal invariance of $T^*T-W_+$. A priori we have seen that the two operators coincide for Einstein metrics, see equation (\ref{eq:11}). It turns out that the coincidence is more general, but we do not need it here.
\end{rem}

{\begin{rem}
    The same considerations also lead to a different conformal invariance for the divergence:
    \begin{equation}
      \label{eq:30}
      \delta^{f^2g} = f^{-4}\circ \delta^g \circ f^2.
    \end{equation}
\end{rem}
}

\section{Instability of compact conformally Kähler metrics}\label{sec:instability-compact}

The Hilbert-Einstein functional is defined as $\mathcal{S}:g\mapsto \int_M \Scal^gdv_g$, and its first variation for a compactly supported deformation of the metric $k$ is $-\int_M\langle E_g,k \rangle_gdv_g$, where $E_g= \Ric-\frac{\Scal^g}{2}g$. It is the second variation of this scalar curvature that determines the stability of an Einstein metric. From \cite[Proposition 4.55]{Bes87}, for a \textit{compactly supported} and \textit{traceless} {and divergence-free} deformation $k$, this second variation at an Einstein metric is
\begin{equation}\label{second variation HE}
    \mathcal{S}^{(2)}_g(k,k) = -\int_M \big\langle L_gk\,,\,k \big\rangle_gdv_g.
\end{equation}

\begin{defn}
 An Einstein metric $g$ is \textit{semistable} if the second variation of $\mathcal{S}$ at $g$ is nonpositive for \textit{all} traceless {divergence free} deformations. It is \textit{unstable} otherwise.
\end{defn}

We will apply this definition to the following setting. Let us consider $g$ an Einstein metric that is conformal to a Kähler metric $\Tilde{g}$ through the function $g=f^2\Tilde{g}$. The conformal factor is defined up to a multiplicative constant, and from \cite[Prop. 5]{Der83} we can take
\begin{equation}
  \label{eq:19}
  f^{-1}=\frac{\Scal^{\tilde g}}6.
\end{equation}
By \cite{Der83} this function cannot vanish. Our convention differs from the convention in \cite{Der83} by the factor $\frac16$: the choice (\ref{eq:19}) will be more convenient for us because $\frac16 \Scal^{\tilde g}$ is the eigenvalue of $W_+^{\tilde g}$ on the Kähler form $\tilde{\omega}_+$ of $\Tilde{g}$, and therefore from (\ref{eq:18}) the eigenvalue of $W_+^g$ on $\tilde \omega_+$ is $f^{-3}$.

Note that in (\ref{eq:19}) one should a priori take for $f^{-1}$ the absolute value of the RHS, but it turns out that both in the case of a compact Einstein $(M,g)$ with $\Scal^g>0$ and in the noncompact case with $\Scal^g=0$ one has necessarily $\Scal^{\tilde g}>0$: indeed we write the Yamabe equation
\[ \Scal^g = f^{-3} (6\Delta^{\Tilde g} + \Scal^{\Tilde g})f. \]
In the compact case $f$ has a minimum; in the noncompact case, we will see that $f$ is proper and therefore also has a minimum. At a minimum $x_0$ of $f$ it follows from the maximum principle that $\Scal^{\Tilde g}(x_0)\geq0$, so since $\Scal^{\Tilde g}$ cannot vanish, it must be positive everywhere.

Suppose that we also have a closed anti-selfdual 2-form $\omega_-$.
Then we define (for the metric $g$):
\begin{equation}\label{eq def h direction instable}
  h := \omega_-\,\circ\,f^{3}\tilde{\omega}_+ .
\end{equation}

\begin{thm}\label{thm:inst1}
  Suppose $(M^4,g)$ is compact, Einstein, conformally Kähler, with nonnegative scalar curvature. Then for any non-zero closed anti-selfdual form $\omega_-$ {the projection $h_0$ on divergence free tensors of} the tensor $h$ defined by (\ref{eq def h direction instable}) destabilizes $g$, that is
  \begin{equation}
    \label{eq:20}
    \mathcal{S}^{(2)}_g(h_0,h_0) > 0.
  \end{equation}
\end{thm}

The version of Theorem \ref{thm:inst1} for $(M,g)$ complete noncompact will be stated in Section \ref{sec:instability-alf}.

\begin{exa}
  A trivial example is that of a Kähler-Einstein Fano manifold $(M^4,g)$: in that case the instability is well-known, see formula \cite[(12.92')]{Bes87}. See also \cite{CaoHamIlm04,HalMur11} for similar results in the context of Ricci flow. 
\end{exa}
\begin{exa}
  The 4-dimensional, non Kähler but conformally Kähler, Einstein metrics were classified by LeBrun \cite{LeB12}: one gets only the Page metric on $\mathbb{C}P^2 \# \overline{\mathbb{C}P^2}$ and the Chen-LeBrun-Weber metric \cite{CheLebWeb08} on $\mathbb{C}P^2 \# 2\overline{\mathbb{C}P^2}$. So the theorem says that these two metrics are unstable for the Ricci flow and provides destabilizing tensors. The instability of the Page metric was proven, and that of the Chen-LeBrun-Weber metric was conjectured in \cite{HHS14}. Further numerical evidence of the instability of the Chen-Lebrun-Weber metric and an easier proof of the instability of the Page metric as a Ricci shrinker were provided in \cite{HalMur14,HalMur17} studying conformal deformations.
\end{exa}
\begin{rem}
   The 2-form $\tau=f^{3}\tilde{\omega}_+$ has a nice conformal interpretation: it is a Killing 2-form for $g$, that is it satisfies the equation
\begin{equation}\label{eq:21}
 \mathcal{T}^g(\tau)_X:=\nabla_X\tau - \frac13 ( X \lrcorner d\tau - X \wedge \delta\tau ) = 0.
\end{equation}
This is a conformally invariant equation: $f^{-3}\mathcal{T}^{f^2g}(f^3\tau)=\mathcal{T}^g(\tau)$ and therefore the solution $f^3\tilde \omega_+$ for $g$ corresponds to the trivial solution $\tilde \omega_+$ for $\tilde g$.
 \end{rem}

\begin{proof}
  We begin by considering $\tilde h=\omega_-  \,\tilde\circ \,\tilde \omega_+$ for the Kähler metric $\tilde g$: since $\tilde \omega_+$ is parallel and $d\omega_-=0$, we have $d_-^* \tilde h=0$ and therefore {$\delta^{\tilde g}\tilde h=0$ and}
  \begin{equation}
    \label{eq:22}
    P^{\tilde g}\tilde h = - \omega_- \,  \tilde\circ \, W_+^{\tilde g}(\tilde \omega_+)  
    = - \frac{\Scal^{\tilde g}}6 \tilde h = - f^{-1} \tilde h.
  \end{equation}

  Now observe that $h=f\tilde h$ because $\tilde \circ = f^2 \circ$. By the conformal invariance (\ref{eq:13})  {and (\ref{eq:30})} we obtain
  \begin{equation}
    \label{eq:23}
    P^g h = - f^{-3} h {, \quad \delta^g(f^{-3}h)=0}.
  \end{equation}
   {From (\ref{eq:29}) it follows that $L$ and $P$ respect the decomposition $\Sym_0 T^*M=\ker \delta \oplus \im \delta^*$; but since we already have $\delta^g(P^gh)=0$, one gets $P^gh_0=P^gh=-f^{-3}h$.}
From (\ref{second variation HE}) and (\ref{eq:11}) we have
  \begin{align}
    \mathcal{S}^{(2)}_g(h_0,h_0) & {= - \int_M \big\langle Lh_0,h_0 \big\rangle dv_g} \\
    & {= - \int_M \big\langle Lh_0,h \big\rangle dv_g} \\ 
    &= - \int_M \big\langle Ph_0 - \frac{\Scal^g}{12}h_0 , h \big\rangle dv_g \\
 & { = \int_M \big\langle f^{-3}h  + \frac{\Scal^g}{12}h,h  \big\rangle dv_g } \\
    &>0.
  \end{align}
  The instability follows.
\end{proof}

\section{Instability of conformally Kähler ALF Ricci-flat metrics}\label{sec:instability-alf}

We now extend Theorem \ref{thm:inst1} to noncompact complete Ricci flat metrics with quadratic decay of curvature, that is to \textit{gravitational instantons}. The known non Kähler examples are all conformally Kähler, and ALF in the following sense \cite{BiqGau23}:

\begin{defn}
    A Riemannian manifold $(M^{4}, g)$ is ALF if
\begin{itemize}
  \item it has one end diffeomorphic to $(A,+\infty) \times L$, where $L$ is $\mathbb{S}^{1} \times \mathbb{S}^{2}, \mathbb{S}^{3}$, or a finite quotient;
  \item there is a triple $(\eta, T, \gamma)$ defined on $L$, where $\eta$ is a 1-form, $T$ a vector field such that $\eta(T)=1$ and $T\lrcorner d \eta=0$, and $\gamma$ is a $T$-invariant metric on the distribution $\ker \eta$;
  \item the transverse metric $\gamma$ has constant curvature $+1$ ;
  \item the metric $g$ has the behaviour
    \begin{equation}
      \label{eq:24}
      g=d r^{2}+r^{2} \gamma+\eta^{2}+h, \text { with }|\nabla^{k} h|=\mathrm{O}\left(r^{-1-k}\right),
    \end{equation}
where $\gamma$ is extended to the whole $TL$ by deciding that $\ker\gamma$ is generated by $T$, and the covariant derivative $\nabla$ and the norm are with respect to the asymptotic model $d r^{2}+r^{2} \gamma+\eta^{2}$.
\end{itemize}
\end{defn}

The definition covers all the known examples: the Riemannian Kerr metrics (a special case being the Schwarzschild metric), the Taub-bolt metric, the Chen-Teo metrics \cite{CheTeo11}. In addition, the Kerr and Chen-Teo metrics are AF, that is $L=\mathbb{S}^{2} \times \mathbb{S}^{1}$, so that the end has the topology of $\left(\mathbb{R}^{3} \backslash \mathrm{B}(R)\right) \times \mathbb{S}^{1}$ for some large $R$. It is expected that this list gives all examples of conformally Kähler, non Kähler, ALF gravitational instantons, see \cite{AADNS}.

 {We also introduce a common notion of stability for ALF Ricci-flat metrics. 
\begin{defn}
     An Ricci-flat ALF metric $g$ is \textit{semistable} if the second variation of $\mathcal{S}$ at $g$ is nonpositive for \textit{all} traceless  {divergence free} deformations $h$ so that $|\nabla^kh| = O(r^{-1-k})$. It is \textit{unstable} otherwise.
\end{defn}
\begin{rem}
    The decay assumption $|\nabla^kh| = O(r^{-1-k})$ as well as the transverse-traceless assumption ensures that no boundary counter-term is required to make sense of the variations of $\mathcal{S}$ in the direction $h$. Indeed, the first variation of $\operatorname{Scal}$ in the direction $h$ vanishes pointwise while the second and higher variations are all integrable.
\end{rem}
}

To apply the technique of the previous section, we need to verify that our $2$-tensor $h=\omega_-\circ\tau$ (with $\tau$ the Killing 2-form and $\omega_-$ harmonic) decays suitably at infinity. Our gravitational instanton is conformally Kähler with $g=f^2\tilde g$. From the definition of an ALF metric we have $|W_+|=O(r^{-3})$, and actually for all the examples we have exactly $|W_+| \sim \mathrm{cst.}\, r^{-3}$. Therefore the conformal factor $f$ satisfies $f \sim \mathrm{cst.}\, r$, and the Killing 2-form $\tau=f^3 \tilde \omega_+$ satisfies
\begin{equation}
  \label{eq:25}
  |\tau| = O(r).
\end{equation}
\begin{rem}\label{rem:growth}
  This growth is true for all our examples (which are supposed to give a complete list). It is proved in \cite{BiqGau23} to be true in general in the toric case. Actually one can prove the growth (\ref{eq:25}) in the general case, which is the basic ingredient needed to make all our arguments abstract, and not specific to the examples. Since it looks likely that these known examples already exhaust all conformally Kähler, non Kähler, gravitational instantons (this seems to be proved in the recently posted article \cite{LiM23a}, as mentioned in the introduction), we avoided the proof of this general statement, and therefore the arguments below work only for the known examples. 
\end{rem}

There is a concrete construction of an anti-selfdual harmonic form $\omega_-$. Denote by $J$ the complex structure. The Kähler metric $\tilde g$ is extremal \cite{Der83}, which by definition means that the vector field $J \nabla^{\tilde{g}}\mathrm{Scal}^{\tilde{g}}$ is a Killing vector field. By (\ref{eq:19}) we rewrite (up to a constant) this vector field in terms of the metric $g$ and obtain that
\begin{equation}
  \label{eq:26}
  X = J \nabla^g f
\end{equation}
is a Killing vector field. Denoting by $\alpha$ the 1-form associated to $X$ via the metric $g$, Kostant's formula (\ref{eq:28}) below implies that $\nabla^*\nabla\alpha=\Ric\alpha=0$, therefore from the Bochner formula $\alpha$ is harmonic. Since $X$ is Killing we have $d^*\alpha=0$ and therefore $d^*d\alpha=0$. On $\Omega_\pm$ we have $d^*=\mp*d$, therefore $d_+^*d_+-d_-^*d_-=-*(dd_++dd_-)=-*d^2=0$, so we recover the well-known identity $d_+^*d_+=d_-^*d_-=\frac12 d^*d$. It follows that $d_-^*d_-\alpha=0$, therefore we obtain a closed (and co-closed) anti-selfdual form defined by
\begin{equation}
  \label{eq:27}
  \omega_-=d_-\alpha .
\end{equation}
The asymptotics of the Killing form $\tau$ determined in \cite{BiqGau23} in the toric case (this contains all our examples, see Remark \ref{rem:growth}) implies that, up to a constant, one has $\alpha \sim \eta$, where $\eta$ is the 1-form from the definition (\ref{eq:24}). Since $T\lrcorner d\eta=0$, one has $|d\eta|=O(r^{-2})$ and therefore $|\omega_-| = O(r^{-2})$. So we obtain:


\begin{prop}    \label{prop decay 2 forms}
  For the Kerr, Taub-bolt and Chen-Teo metrics, the tensor $h=\omega_-\tau$, where $\tau$ is the Killing 2-form coming from the conformally Kähler structure and $\omega_-$ is the harmonic anti-selfdual form defined by (\ref{eq:27}), is nonzero and satisfies, for $l\in\mathbb{N}$ and some $C_l>0$, the bounds
    \begin{equation}\label{eq: decay h}
        |\nabla_g^l h|\leqslant C_l r^{-1-l}. 
    \end{equation}
\end{prop}
\begin{proof}
  We already proved in these cases the $C^0$ bound and the bounds on the derivatives are obtained in the same way. There remains to prove that $h$ is nontrivial, that is $\omega_-$ is nontrivial.

  The proof does not refer to any specific metric: suppose $\omega_-=0$, then we will prove that $g$ is hyperKähler for the opposite orientation. Indeed, if $\omega_-=0$ then $A:=\nabla\alpha$ is a selfdual 2-form (actually, up to a constant, it is $\tilde \omega_+$ but we do not need this fact).

  For the Killing field $X$ in \eqref{eq:26} we use Kostant's formula: for a tangent vector $Y$,
  \begin{equation}
    \label{eq:28}
    \nabla_YA=-R_{X,Y}.
  \end{equation}
  Since $A$ is a selfdual 2-form, we obtain that $R_{X,Y}$ is selfdual for all tangent vectors $Y$. Since $\Ric(g)=0$ the curvature operator $R$ preserves $\Lambda_\pm$, so for any $Y$ we have $R_{X\wedge Y+*(X\wedge Y)}\in \Lambda_+$ and therefore $R_{*(X\wedge Y)}\in \Lambda_+$. Since the collection of $\{ X \wedge Y, *(X \wedge Y)\}$ when $Y\in TM$ generates all 2-forms,  the image of the curvature operator is entirely contained in $\Lambda_+$. Since $R$ acts on $\Lambda_-$ by $W_-$ we deduce that $W_-=0$, that is $g$ is hyperKähler for the other orientation, which is contrary to the hypothesis that $g$ is non Kähler.
\end{proof}

\begin{rem}\label{remark Taub NUT EH}
  Conversely, in the case of a conformally Kähler metric which is hyperKähler for the other orientation, that is satisfying $W_-=0$ (Eguchi-Hanson, Taub-NUT), the form $\omega_-$ is trivial: one can reverse the above argument, or more directly observe that the Weitzenböck formula
  \[ d_-d_-^*\omega_- = \frac12 \nabla^*\nabla\omega_- - W_-(\omega_-) \]
implies $\nabla^*\nabla\omega_-=0$ if $W_-=0$. Since $|\omega_-|=O(r^{-2})$ one can integrate by part to obtain $\nabla\omega_-=0$. This is consistent with the fact that hyperKähler metrics are stable.
\end{rem}

\begin{thm}
  The Kerr, Taub-Bolt and Chen-Teo metrics are unstable.
\end{thm}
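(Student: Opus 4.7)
The plan is to reuse the algebraic identity from the proof of Theorem \ref{thm:inst1} and handle the noncompactness by a cutoff argument. The computation giving $Ph = -f^{-3}h$ in Theorem \ref{thm:inst1} is entirely pointwise: it relies only on $\tilde\omega^+$ being $\tilde g$-parallel, on $\omega_-$ being closed, and on the conformal invariance of $P$ from Proposition \ref{prop: conformal covariance}. None of these ingredients uses compactness, so $Ph = -f^{-3}h$ persists on each of the ALF instantons. Since $g$ is Ricci-flat, the scalar curvature term in (\ref{eq:26}) disappears, and for any compactly supported traceless deformation $k$ one has
\begin{equation*}
    \mathcal{S}^{(2)}_g(k,k) = -\int_M \langle Pk, k\rangle\, dv_g + \tfrac{1}{3}\int_M |\delta k|^2\, dv_g.
\end{equation*}

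Next I would verify that the two target integrals $\int_M f^{-3}|h|^2\, dv_g$ and $\int_M |\delta h|^2\, dv_g$ converge. Using the bounds $|\nabla^l h|\leq C_l r^{-1-l}$ from Proposition \ref{prop decay 2 forms}, the asymptotic $f \sim \mathrm{cst.}\, r$, and the ALF volume growth $dv_g \sim r^2\, dr\wedge dv_L$, the integrands decay like $r^{-5}$ and $r^{-4}$ respectively, integrated against a weight $r^2$; both integrals are therefore finite, and the first one is strictly positive because $h$ does not vanish identically, again by Proposition \ref{prop decay 2 forms}.

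The final step is to realize $h$ as a limit of compactly supported tensors. Pick a smooth cutoff $\chi_R$ equal to $1$ on $\{r\leq R\}$, supported in $\{r\leq 2R\}$, with $|\nabla\chi_R| + R|\nabla^2\chi_R| = O(R^{-1})$; then $k_R := \chi_R h$ is compactly supported and traceless. Applying the simplified formula to $k_R$ and expanding $P(\chi_R h) = \chi_R Ph + [P,\chi_R]h$ and $\delta(\chi_R h) = \chi_R \delta h + (\nabla\chi_R)\lrcorner h$ produces error terms supported in the annulus $\{R\leq r\leq 2R\}$, which has volume $O(R^3)$, with integrands bounded by $|\nabla\chi_R|^2 |h|^2$, $|\nabla\chi_R||\nabla h||h|$, and $|\nabla^2\chi_R||h|^2$. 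Each is $O(R^{-4})$ pointwise, so the total error is $O(R^{-1}) \to 0$, and passing to the limit gives
\begin{equation*}
    \mathcal{S}^{(2)}_g(k_R, k_R) \longrightarrow \int_M f^{-3}|h|^2\, dv_g + \tfrac{1}{3}\int_M |\delta h|^2\, dv_g > 0.
\end{equation*}
For $R$ large enough, $k_R$ is thus a compactly supported traceless destabilizing direction, and instability follows. The main technical point is carrying out the cutoff expansion cleanly for both the second-order operator $P = T^*T - W_+$ and the $\delta^*\delta$ term; the sharp decay of $h$ from Proposition \ref{prop decay 2 forms} is what makes the commutator errors harmless.
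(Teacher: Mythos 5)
Your proposal is correct and follows essentially the same route as the paper: the pointwise identity $Ph=-f^{-3}h$ carries over unchanged, the decay and nontriviality come from Proposition \ref{prop decay 2 forms}, and the noncompactness is handled by the cutoff argument that the paper invokes as ``standard'' and that you spell out explicitly (your error estimate $O(R^{-1})$ on the annulus is the right bookkeeping). The only cosmetic difference is that you retain the nonnegative $\tfrac13\int|\delta k|^2$ term in the limit where the paper simply discards it via the inequality \eqref{eq:34}; this changes nothing.
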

Remind that this is conjectured to be the full list of conformally Kähler, non Kähler, gravitational instantons.
\begin{proof}
  Let $(M,g)$ be one of these gravitational instantons.
  Consider the 2-tensor $h$ coming from Proposition \ref{prop decay 2 forms}. The situation is the same as in the proof of Theorem \ref{thm:inst1}, so we have $Ph=-f^{-3}h$.

   {The divergence free part $h_0$ of $h$ is obtained as $h_0=h-\delta_0^*\alpha$, where $\delta\delta_0^*\alpha=\delta h$. To construct $\alpha$ one can use the b-calculus analysis (see \cite{HauHunMaz04}): a priori \cite{HauHunMaz04} can be applied only in the case where the Killing field $X$ has closed orbits, but by the toric invariance one can reduce to toric invariant objects and then apply the results to all cases.

    It follows from the weighted analysis of the operator $\delta\delta_0^*$ that $h_0$ satisfies the same bounds (\ref{eq: decay h}) as $h$. Then we have $Lh_0=Ph_0=-f^{-3}h$, and since $h=O(r^{-1})$ and $h_0=O(r^{-1})$,}
\[ \int_M - \langle Lh_0,h_0 \rangle dv_g = \int_M - \langle Lh_0, h \rangle dv_g = \int_M f^{-3} |h|^2 dv_g < +\infty. \]
With the bounds (\ref{eq: decay h}) on the derivatives of $h$, a standard cutoff argument gives a compactly supported $k$ with $\int_M -\langle Lk,k \rangle dv_g > 0$, which proves instability.
\end{proof}

\begin{exa}
    On the (Euclidean) Schwarzschild metric:
    $ g =  \frac{dr^2}{1-\frac{2m}{r}}+\big(1-\frac{2m}{r}\big)dt^2+ r^2 g_{\mathbb{S}^2},$  conformal to the Kähler metric $\tilde{g} = r^{-2}g$, one finds $$h = - \frac{1}{r}\Big(\frac{dr^2}{1-\frac{2m}{r}}+\Big(1-\frac{2m}{r}\Big)dt^2 - r^2 g_{\mathbb{S}^2}\Big).$$
\end{exa}

\bibliographystyle{abbrv}
\bibliography{biblio,biquard,ickem}

\end{document}